\newtheorem{thm}{Theorem}
\newtheorem{lem}[thm]{Lemma}
\newcommand{\BZ}{{\mathbb{Z}}}
\newcommand{\BQ}{{\mathbb{Q}}}
\newcommand{\BC}{{\mathbb{C}}}
\newcommand{\g}{\gamma}
\DeclareMathOperator{\trace}{Trace}
\begin{document}

\title[On the skein module of the 3-torus]{On the Kauffman bracket skein module of the 3-torus}
 
\author{ Patrick M. Gilmer}
\address{Department of Mathematics\\
Louisiana State University\\
Baton Rouge, LA 70803\\
USA}
\email{gilmer@math.lsu.edu}
\thanks{partially supported by  NSF-DMS-1311911}
\urladdr{www.math.lsu.edu/\textasciitilde gilmer/}

\date{July 11, 2016}

\begin{abstract}  Carrega has shown that the  Kauffman bracket skein module of the 3-torus over  the field of rational function in the variable $A$ can be generated by 9 skein elements. We show this set of generators is linearly independent.
\end{abstract}

\maketitle

Przytycki \cite{P} and Turaev \cite{T1} independently defined the Kauffman bracket skein module. We will consider this skein module over $\BQ(A)$,   the field of rational functions in
a variable $A$. Given a connected oriented 3-manifold $M$, the skein module $K(M)$ is the vector space over $\BQ(A)$ generated by the set of isotopy classes of framed unoriented links in $M$ (including the empty link) modulo the subspace generated by the Kaufman relations:

$$\begin{minipage}{0.4in}\includegraphics[width=0.4in]{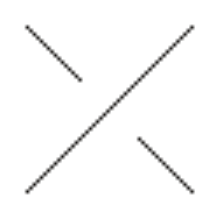}\end{minipage}= A \begin{minipage}{0.4in}\includegraphics[width=0.4in]{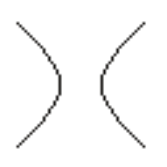}\end{minipage} + A^{-1} \begin{minipage}{0.4in}\includegraphics[width=0.4in]{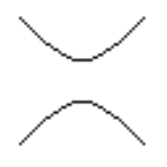}\end{minipage}$$
	$$L \cup \hbox{unknot} = (-A^{2} - A^{-2}) L$$

In the first relation, the  intersection of three links with a 3-ball is $M$ is depicted and  in the exterior of this 3-ball, all 3 links should be completed identically.
In the second relation by an unknot, we mean a loop which bounds a disk in the complement of $L$ with  the framing on the unknot extending to a non-zero normal vector field on the disk.  
Kauffman's argument \cite{K} justifying his bracket approach to the Jones polynomial can be retroactively understood as a proof that $K(S^3)$ is free on the empty link.

It is apparent from the skein definition \cite{Li93,KL,BHMV1,BHMV2} of the Witten-Reshetikhin-Turaev invariant that the WRT invariant of framed links in a fixed closed 3-manifold extends to an invariant of  skein classes in that  3-manifold. This invariant takes the form 
a function on a certain parameter set $\Gamma$ of roots of unity, which are defined by assigning to the variable $A$  an element of $\Gamma$.  Since we are working in this paper with skeins defined over $\BQ(A)$, such a function on  $\Gamma$ may  only be defined for all but a finite number of elements of $\Gamma,$ and two such functions should be considered equal if they agree on all but a finite number of elements of 
$\Gamma.$  For short, we  will use the phrases: ``defined almost everywhere'' and ``agree almost everywhere'' to describe these situations. This idea of using quantum invariants to define invariants of skein classes and show non-triviality or linear          
independence of skein classes appeared in \cite{GH,GZ}. This idea was also used in \cite{G}.

We will work with the TQFT invariants $\left<M,L\right>_{2d+1}$ of a pair consisting of a closed oriented three manifold $M$ (equipped with a $p_1$-structure with $\sigma$-invariant equal to 
 zero) and  an unoriented framed link $L$ \cite{BHMV2}. Here $d$ is an integer greater than or equal to one. We consider the family of these invariants
associated to a choice of $A$ from the set $\Gamma =\{ e^{\pm  \pi i/{(2d+1)} }| d \ge 1 \}$.  This is sometimes called the $SO(3)$ theory. Thus we are working in the theory denoted $V_{2d+1}$ in \cite{ BHMV2}.
This is sometimes called the $SO(3)$ theory. Let $\BC^\Gamma$ 
denote the set of complex valued functions defined almost everywhere on $\Gamma$ where we consider two functions  to be equal if they agree almost everywhere on $\Gamma$. We make  $\BC^\Gamma$ a vector space over $Q(A)$ by setting $R f(\g)= R(\g) f(\g)$ where $R \in Q(A)$, $f \in  \BC^\Gamma$, $\gamma \in \Gamma.$ Taking $\left<M,L\right>$ where $L$ represents a skein class of $K(M)$  defines a linear map
$I:K(M) \rightarrow  \BC^\Gamma$. For a skein $S$ in the 3-torus, we will  use $\left<S\right>$ to denote $\left<S^1 \times S^1 \times S^1,S\right>$ in
$\BC^\Gamma$.

Carrega \cite{C} shows $K(S^1 \times S^1 \times S^1)$  has nine generators. Moreover Carrega shows that these generators will form a basis if the following theorem holds. 

\begin{thm}\label{main}
 The skein given by $1 \times 1 \times S^1$ is non-trivial.
 The skein given by the empty link $\emptyset$ and the skein given by  two parallel copies of  $1 \times 1\times S^1$ form  a linearly independent set in $K(S^1 \times S^1 \times S^1)$.	
\end{thm}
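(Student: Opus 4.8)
The plan is to evaluate the linear map $I\colon K(S^{1}\times S^{1}\times S^{1})\to\BC^{\Gamma}$ on the three skeins involved and to read the theorem off from explicit formulas for the resulting functions on $\Gamma$. Write $T^{3}=S^{1}\times S^{1}\times S^{1}$, let $z$ be the skein $1\times 1\times S^{1}$, and let $z\sqcup z$ be two parallel copies of it. Fix $d\geq 1$ and $A=e^{\pm\pi i/(2d+1)}$. The structural input is that $T^{3}=S^{1}\times\Sigma$ with $\Sigma=S^{1}\times S^{1}$, that under this decomposition $z$ is isotopic to an essential simple closed curve lying in a single slice $\{\ast\}\times\Sigma$, and that $S^{1}\times\Sigma$ is the mapping torus of $\Id_{\Sigma}$. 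Consequently, for any skein $S$ lying in one slice $\{\ast\}\times\Sigma$, the TQFT computes
$$\left<S^{1}\times\Sigma,\,S\right>_{2d+1}=\trace\bigl(\rho_{S}\colon V_{2d+1}(\Sigma)\to V_{2d+1}(\Sigma)\bigr),$$
where $\rho_{S}$ is the operator by which $S$, viewed in the skein algebra of $\Sigma\times[0,1]$, acts on $V_{2d+1}(\Sigma)$. With $S=\emptyset$ this gives $\left<\emptyset\right>=\dim V_{2d+1}(T^{2})=d$.

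For $S=z$, take the standard genus-one basis $\{e_{i}\}_{0\le i\le d-1}$ of $V_{2d+1}(T^{2})$ coming from a solid torus whose meridian represents the class of $z$ in $\Sigma$; then $\rho_{z}$ is multiplication by the meridian in the solid-torus skein algebra, so it is diagonal with eigenvalues $\lambda_{i}=-(A^{2i+2}+A^{-2i-2})$. For $z\sqcup z$ the operator is $\rho_{z}^{2}$, with eigenvalues $\lambda_{i}^{2}$. Hence $\left<z\right>(A)=\sum_{i=0}^{d-1}\lambda_{i}$ and $\left<z\sqcup z\right>(A)=\sum_{i=0}^{d-1}\lambda_{i}^{2}$. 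When $A=e^{\pm\pi i/(2d+1)}$, both $A^{2}$ and $A^{4}$ are primitive $(2d+1)$-th roots of unity, and grouping the positive and negative powers turns each of these into a sum of all the nontrivial $(2d+1)$-th roots of unity (plus, for the square, the constant term $2d$); a short root-of-unity computation then yields, as functions on $\Gamma$,
$$\left<z\right>\equiv 1,\qquad \left<z\sqcup z\right>(e^{\pm\pi i/(2d+1)})=2d-1 .$$
(It is worth checking these against $d=1,2$ directly. Observe also that $\left<z\sqcup z\right>=2\left<\emptyset\right>-\left<z\right>$ in $\BC^{\Gamma}$, which explains why the theorem asks for independence of $\{\emptyset,\,z\sqcup z\}$ and not of $\{\emptyset,\,z,\,z\sqcup z\}$.)

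The first assertion now follows: $I(z)=\left<z\right>=1\neq 0$ in $\BC^{\Gamma}$, so $z$ is non-trivial in $K(T^{3})$. For the second assertion it suffices that $\left<\emptyset\right>$ and $\left<z\sqcup z\right>$ — the functions $d\mapsto d$ and $d\mapsto 2d-1$ on $\Gamma$ — admit no nontrivial $\BQ(A)$-linear relation in $\BC^{\Gamma}$. Suppose $a\left<\emptyset\right>+b\left<z\sqcup z\right>=0$ almost everywhere with $a,b\in\BQ(A)$; then $(a+2b)(e^{\pm\pi i/(2d+1)})\cdot d=b(e^{\pm\pi i/(2d+1)})$ for all but finitely many $d$. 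If $a+2b=0$ as a rational function, then $b=0$ and hence $a=0$. Otherwise $Q:=b/(a+2b)\in\BQ(A)$ satisfies $Q(e^{\pm\pi i/(2d+1)})=d$ for infinitely many $d$; since $e^{\pm\pi i/(2d+1)}\to 1$ while $d\to\infty$, $Q$ must have a pole at $A=1$, and the expansion $e^{\pm\pi i/(2d+1)}-1=\pm\pi i/(2d+1)+O(d^{-2})$ forces that pole to be simple with residue a nonzero rational multiple of $\pi i$ — impossible, since the Laurent coefficients of $Q$ at the rational point $A=1$ are rational and $\pi$ is transcendental. This contradiction proves the independence, and the theorem.

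I expect the only real obstacle to lie in the middle step: identifying, in the exact normalisation of $V_{2d+1}$ used here, the operator $\rho_{z}$ on $V_{2d+1}(T^{2})$ together with its spectrum $\{\lambda_{i}\}$ (the colour set and the eigenvalue formula), and then carrying out the two root-of-unity sums cleanly. The non-triviality of $z$ and the concluding transcendence argument are then immediate.
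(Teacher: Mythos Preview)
Your proof is correct, and it takes a genuinely different route from the paper in two places.

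\textbf{Computing the invariants.} The paper obtains $\langle z\rangle$ by writing $T^{3}$ as surgery on the Borromean rings and doing a skein computation; this leads to $\langle z\rangle=\sum_{i=0}^{d-1}H(i,1)/\Delta_{i}=\sum_{i}(-A^{2i+2}-A^{-2i-2})$. For $\langle z\text{ colored }2\rangle$ it instead makes the $S^{1}$ that $z$ wraps into the circle direction, leaving a colored point on the torus, and applies the trace property to get $\dim V(T^{2},\text{pt colored }2)=d-1$; the paper explicitly warns that this trace trick fails for $z$ itself because the resulting point would carry the odd color~$1$. Your splitting is different: you take an \emph{orthogonal} $S^{1}$ as the circle direction, so that $z$ lies flat in a slice and acts as a curve operator on $V(T^{2})$ (which carries no colored points, hence sits safely in the even subcategory where the tensor-product/trace axiom holds). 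This sidesteps the odd-color obstruction the paper flags and lands directly on the same eigenvalue sum; the $\sum\lambda_{i}^{2}=2d-1$ computation is then immediate and agrees with the paper's $d+(d-1)$.

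\textbf{Linear independence.} The paper argues by the identity theorem: a rational $F(A)$ with $F(e^{\pm\pi i/(2d+1)})=(d-1)/d$ would have to agree with the explicit holomorphic function $f(z)=(\pi i-3\log z)/(\pi i-\log z)$ along the $e^{+\pi i/(2d+1)}$ sequence, but $f$ takes the wrong values along the $e^{-\pi i/(2d+1)}$ sequence. Your argument instead looks at the Laurent expansion of $Q=b/(a+2b)\in\BQ(A)$ at $A=1$ and observes that the required simple pole would have residue $\pi i/2\notin\BQ$. This is valid but invokes the transcendence of $\pi$, which is heavier than necessary: indeed your own setup already gives a contradiction without it, since the $e^{+\pi i/(2d+1)}$ sequence forces the residue to be $\pi i/2$ while the $e^{-\pi i/(2d+1)}$ sequence forces it to be $-\pi i/2$. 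That symmetrized version is essentially the paper's argument in different clothing.
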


Two parallel copies of a loop is skein equivalent to the loop colored 2 plus the empty link. So  Theorem \ref{main} follows from  Lemmas \ref{one} and \ref{dim} below.

\begin{lem} \label{one}
 We have that $\left<1 \times 1 \times S^1\right> \ne 0 \in \BC^\Gamma.$
In fact,  $\left<1 \times 1 \times S^1\right>$ is the constant function with value one.
  \end{lem}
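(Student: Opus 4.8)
The plan is to compute, for each $d\ge 1$ and each choice $A=e^{\pm\pi i/(2d+1)}\in\Gamma$, the TQFT invariant $\left<S^1\times S^1\times S^1,\ 1\times 1\times S^1\right>_{2d+1}$ as the trace of a single curve operator on the genus one module $V_{2d+1}(T^2)$. Slicing the $3$-torus along its first circle factor presents it as the mapping torus $T^2\times S^1$ of the identity of $T^2=S^1\times S^1$, with fibers $\{t\}\times S^1\times S^1$; the link $1\times 1\times S^1$ lies in the fiber over $t=1$, where it is the simple closed curve $\gamma=1\times S^1$. Cutting along a fiber disjoint from $\gamma$ and applying the gluing axioms of \cite{BHMV2} gives
\[
\left<S^1\times S^1\times S^1,\ 1\times 1\times S^1\right>_{2d+1}=\trace(\hat\gamma),
\]
where $\hat\gamma\colon V_{2d+1}(T^2)\to V_{2d+1}(T^2)$ is the endomorphism associated to the cobordism $(T^2\times I,\ \gamma\times\{1/2\})$.

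To evaluate this trace, choose the solid torus $N=S^1\times D^2$ with $\partial N=T^2$ so that $\gamma=1\times\partial D^2$ bounds the meridian disc $1\times D^2$ of $N$. Recall from \cite{BHMV2} that $V_{2d+1}(T^2)$ has dimension $d$, with basis the vectors $v_j$ represented by $N$ with its core colored by the color $j$, $j\in\{0,2,4,\dots,2d-2\}$. Pushing $\gamma$ into $N$ through the collar, it becomes a zero-framed $1$-colored meridian encircling the $j$-colored core once; by the encircling relation, which holds in any Kauffman bracket skein module, this loop may be deleted at the cost of multiplying $v_j$ by the scalar
\[
\lambda_j=\frac{\langle\text{Hopf link colored }1,j\rangle_{S^3}}{\langle\text{unknot colored }j\rangle_{S^3}}=-(A^{2j+2}+A^{-2j-2}),
\]
which is a genuine element of $\BQ(A)$, free of any framing anomaly. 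Hence $\hat\gamma$ is diagonal in this basis and
\[
\trace(\hat\gamma)=-\sum_{\substack{0\le j\le 2d-2\\ j\ \text{even}}}\bigl(A^{2j+2}+A^{-2j-2}\bigr).
\]

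It remains to evaluate this sum at $A=e^{\pm\pi i/(2d+1)}$. Put $\zeta=A^2$, a primitive $(2d+1)$-st root of unity. As $j$ runs over $0,2,\dots,2d-2$, the exponents $j+1$ run over the odd residues $1,3,\dots,2d-1$, while the exponents $-(j+1)$ run, modulo $2d+1$, over the nonzero even residues $2,4,\dots,2d$; together these exhaust the nonzero residues modulo $2d+1$ exactly once. Therefore $\sum_j(\zeta^{\,j+1}+\zeta^{-(j+1)})=\sum_{m=1}^{2d}\zeta^{\,m}=-1$, so $\trace(\hat\gamma)=1$. Since this holds for every $d\ge 1$ and for both signs, $\left<1\times 1\times S^1\right>$ is the constant function $1$ on $\Gamma$, and in particular is nonzero in $\BC^\Gamma$.

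The step that will need the most care is the bookkeeping of conventions from \cite{BHMV2}: one must use the correct basis of $V_{2d+1}(T^2)$ and the standard normalization of colored unknots, and verify that presenting the closed $3$-torus invariant as a trace on $V_{2d+1}(T^2)$ contributes no anomaly factor, so that $\hat\gamma$ is exactly the ``encircle by a $1$-colored loop'' operator. Once these are pinned down, the eigenvalue computation and the cyclotomic identity are routine.
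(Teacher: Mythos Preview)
Your argument is correct, but it proceeds along a different line from the paper's primary proof. The paper presents the $3$-torus as $0$-surgery on the Borromean rings with $1\times 1\times S^1$ a meridian, expands the $\omega$-colored surgery diagram, and reduces $\langle 1\times 1\times S^1\rangle$ to the sum $\sum_{i=0}^{d-1}H(i,1)/\Delta_i=\sum_{i=0}^{d-1}\bigl(-A^{2i+2}-A^{-2i-2}\bigr)$, which it checks equals $1$. You instead slice the $3$-torus along a torus fiber that \emph{misses} the link, so that the boundary surface carries no colored point; this places the slice in the even subcategory where the tensor-product (hence trace) axiom of $V_{2d+1}$ holds, and you compute the trace of the curve operator $\hat\gamma$ directly. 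This neatly sidesteps the obstruction the paper itself points out: slicing transversally to the loop would put an odd-colored point on the torus, where the trace formula is unavailable. Indeed, the paper's own alternative proof handles that obstruction differently, by first recoloring $1\mapsto 2d-2$ via \cite[Lemma~6.3(iii)]{BHMV1} and then invoking $\dim V(T^2,\text{pt colored }2d{-}2)=1$. Your final sum, indexed by the even colors $0,2,\dots,2d-2$, differs in appearance from the paper's sum over $0,1,\dots,d-1$, but the two are matched term-by-term by the same symmetry $i\leftrightarrow 2d-1-i$, and your cyclotomic evaluation is correct. The one caveat you flag---that the trace identification introduces no anomaly factor---is the right thing to check; once the $p_1$-structure with $\sigma=0$ is fixed as in the paper, this goes through.
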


 \begin{figure}[h]
\centerline{\includegraphics[width=4in]{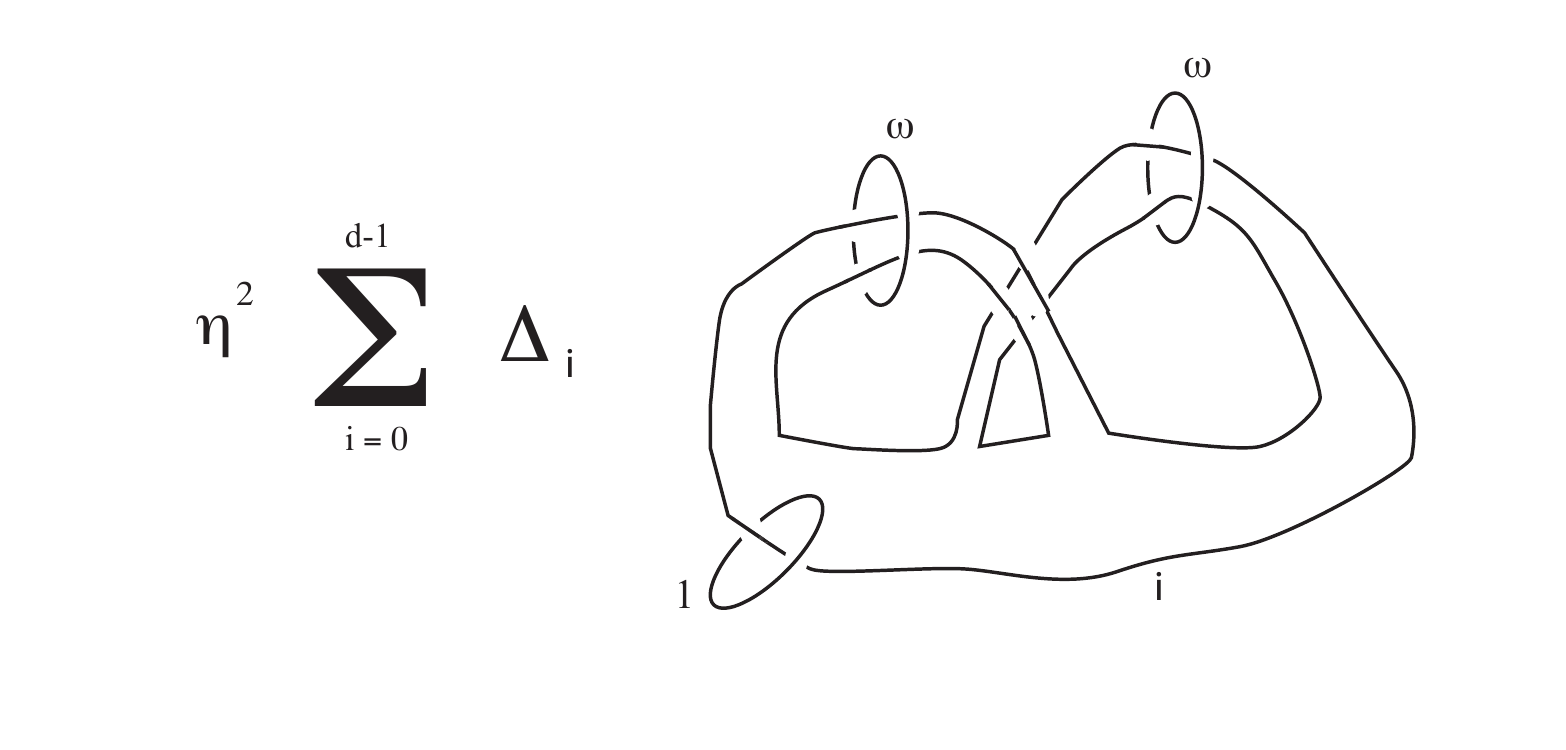}}
\caption{$\left<1 \times 1 \times S^1\right>$}\end{figure}

\begin{proof} The 3-torus can be obtained by doing  surgery on the zero framed Borromean rings. The knot $1 \times 1 \times S^1$ can be taken to be a meridian to one of the components. It follows that $\left<1 \times 1 \times S^1\right>$ is $\eta$ times  the evaluation of  the $\omega$ colored Borromean rings with the meridian of one of them colored one. Here we follow some of the notation of \cite{BHMV2}, with $\omega= \eta \Omega_{2d+1}= \eta \sum_{i=0} ^{d-1} \Delta_i e_i$. 
Here $\Delta_i$ is Kauffman's notation \cite{KL} for the evaluation of an  unknot colored $i$. 
In the diagram, we expand  the $\omega$ colored component with the meridian hanging from it as a sum over $0 \le i \le d-1$ obtaining Figure 1.

We can then do the  simplification given in Figure 2 in two separate locations.
 \begin{figure}[h]\label{lol}
\centerline{\includegraphics[width=4in]{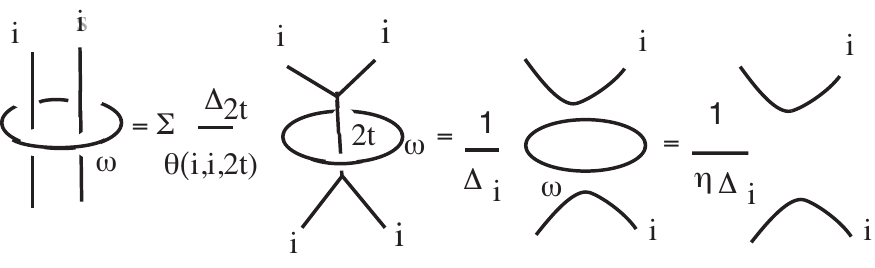}}
\caption{The sum is over $t$  in the range $0 \le t \le i$}\end{figure}
After performing these two simplifications,  we have
that $$\left<1 \times 1 \times S^1\right>= \sum_{i=0} ^{d-1} \frac {H(i,1)}{\Delta_i},$$
where $H(i,a)$ denotes the evaluation of a zero framed hopf link colored with one component colored $i$ and the other  one colored $A$. Using \cite[9.8]{KL}, one sees that $H(i,1)/{\Delta_i}= -A^{2i+2}-A^{-2i-2}$ 
One  can check that $\sum_{i=0} ^{d-1} \frac {H(1,i)}{\Delta_i}=1.$
\end{proof}
  \begin{lem}\label{dim} The two functions  
$\left<1 \times 1 \times S^1(\text{colored $2$})\right>$ and  $\left<\emptyset\right>$ form a linearly independent set in the $Q(A)$ vector space
 $\BC^\Gamma$.
\end{lem}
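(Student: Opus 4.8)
The plan is to evaluate both functions explicitly on $\Gamma$ using the surgery computation of Lemma~\ref{one}, and then to show that the value $d$, regarded as a function of $\gamma=e^{\pm\pi i/(2d+1)}\in\Gamma$, cannot agree almost everywhere with the restriction to $\Gamma$ of a rational function in $A$.

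First I would compute $\left<1\times 1\times S^1(\text{colored }2)\right>$ exactly as in Lemma~\ref{one}: present $S^1\times S^1\times S^1$ as surgery on the zero framed Borromean rings with $1\times 1\times S^1(\text{colored }2)$ a meridian, now colored $2$, of one component, and perform the same two fusions (Figure~2) to reduce the evaluation to $\sum_{i=0}^{d-1}H(i,2)/\Delta_i$. An $e_2$-colored meridian acts on an $i$-colored strand by the scalar $H(i,2)/\Delta_i=\bigl(H(i,1)/\Delta_i\bigr)^2-1=(A^{2i+2}+A^{-2i-2})^2-1=A^{4i+4}+1+A^{-4i-4}$, using $e_2=e_1^2-1$ together with \cite[9.8]{KL}. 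Hence at $A=\gamma=e^{\pm\pi i/(2d+1)}$, where $A^{2(2d+1)}=1$,
\[
\left<1\times 1\times S^1(\text{colored }2)\right>(\gamma)=\sum_{i=0}^{d-1}\bigl(A^{4i+4}+1+A^{-4i-4}\bigr)=d-1,
\]
because the two geometric sums $\sum_{i=0}^{d-1}A^{\pm(4i+4)}$ add up to $-1$. The identical computation with the meridian colored $0$ gives $H(i,0)/\Delta_i=1$ and hence $\left<\emptyset\right>(\gamma)=d$ (equivalently, $\left<\emptyset\right>=\dim V_{2d+1}(S^1\times S^1)$).

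Next, assume the set is linearly dependent. Since $\left<\emptyset\right>$ is not the zero function of $\BC^\Gamma$ (it equals $d\ge1$ everywhere on $\Gamma$), there is $c\in\BQ(A)$ with $\left<1\times 1\times S^1(\text{colored }2)\right>=c\,\left<\emptyset\right>$ in $\BC^\Gamma$; that is, $c(\gamma)=(d-1)/d=1-1/d$ for all but finitely many $\gamma\in\Gamma$. Letting $d\to\infty$ along $\gamma=e^{\pm\pi i/(2d+1)}\to1$ and using that $c$ extends to a continuous map $\BC\cup\{\infty\}\to\BC\cup\{\infty\}$ forces $c(1)=1$; so $1-c$ is a nonzero rational function vanishing at $A=1$ to some finite order $m\ge1$, say $1-c(A)=b\,(A-1)^m+o\bigl((A-1)^m\bigr)$ as $A\to1$ with $b\ne0$. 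Evaluating at $\gamma=e^{\pm\pi i/(2d+1)}$, where $\gamma-1\sim\pm i\pi/(2d)$, yields $1/d=1-c(\gamma)\sim b\,(\pm i\pi/(2d))^m$, i.e.\ $2^m d^{m-1}\sim b\,(\pm i\pi)^m$ as $d\to\infty$. For $m\ge2$ the left side is unbounded and the right side is a fixed constant, a contradiction; for $m=1$ the constant $b\,(\pm i\pi)$ would have to be the same for both signs, forcing $b=0$, again a contradiction. Therefore the set is linearly independent.

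I expect the last paragraph to be the main obstacle: the point is to rule out a rational interpolation of the values $d$ over $\Gamma$. It works because $\Gamma$ accumulates at $A=1$, the values $d$ blow up there, and the complex-conjugate pair $e^{\pm\pi i/(2d+1)}$ both lie in $\Gamma$ — this last fact is what pins down the local behavior of $c$ at $A=1$ tightly enough to get the contradiction. Everything preceding it is a routine Kauffman-bracket computation parallel to Lemma~\ref{one}.
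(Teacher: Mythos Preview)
Your proof is correct. The computation of the two values $d$ and $d-1$ via the surgery/Hopf-link formula is exactly the alternative calculation the paper records in the remark following Lemma~\ref{dim2} (the paper's primary derivation of these values instead invokes the TQFT trace property and the known dimensions of $V_{2d+1}(S^1\times S^1,\text{colored point})$). Where your argument genuinely diverges from the paper is in ruling out a rational interpolation of $(d-1)/d$: the paper writes down an explicit holomorphic function $f(z)=\dfrac{\pi i-3\log z}{\pi i-\log z}$ that hits $(d-1)/d$ at $e^{\pi i/(2d+1)}$, invokes the identity theorem to force $F=f$, and then observes that $f$ takes the \emph{wrong} values $(d+2)/(d+1)$ at the conjugate points $e^{-\pi i/(2d+1)}$. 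You instead extract the leading Taylor coefficient of $1-c$ at $A=1$ and play the two conjugate branches $e^{\pm\pi i/(2d+1)}$ against each other asymptotically. Both arguments hinge on the same structural feature of $\Gamma$ --- that it contains complex-conjugate pairs accumulating at $1$ --- but yours is more elementary, needing only local asymptotics rather than the identity theorem and an explicit logarithmic interpolant.
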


\begin{lem}\label{dim2}
We have
$$\left<\emptyset\right>=d$$
$$\left<1 \times 1 \times S^1(\text{colored $2$})\right>=d-1$$	
\end{lem}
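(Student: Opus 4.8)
The plan is to compute both TQFT invariants directly as functions of $d$ using the surgery formula for $V_{2d+1}$, exactly as in the proof of Lemma~\ref{one}, and then observe that the two resulting functions of $d$ (namely $d$ and $d-1$) are manifestly $\BQ(A)$-linearly independent in $\BC^\Gamma$, which yields Lemma~\ref{dim} for free. First I would recall that $\left<\emptyset\right>=\left<S^1\times S^1\times S^1\right>$ is the quantum invariant of the 3-torus, which is obtained by $\omega$-coloring the zero-framed Borromean rings and evaluating. Expanding one component as $\eta\sum_{i=0}^{d-1}\Delta_i e_i$ and using the two fusion simplifications of Figure~2 in the same way as in Lemma~\ref{one}, the meridian-colored-$1$ case gave $\sum_i H(i,1)/\Delta_i$; here with no meridian at all (the empty link) the analogous reduction should give $\eta^2\cdot(\text{something})$ collapsing to $\sum_{i=0}^{d-1} 1 = d$, after using $\eta^2\sum\Delta_i^2=1$-type identities for the $SO(3)$ theory at $A=e^{\pm\pi i/(2d+1)}$. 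In other words, the first formula amounts to the known fact that $\left<S^1\times S^1\times S^1\right>=\dim V(S^1\times S^1)=d$ in this theory.

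For the second formula I would run the identical surgery computation but with the meridian now colored $2$ instead of $1$. Following the proof of Lemma~\ref{one} verbatim with the color $1$ replaced by $2$, one gets
$$\left<1\times1\times S^1(\text{colored }2)\right>=\sum_{i=0}^{d-1}\frac{H(i,2)}{\Delta_i}.$$
Using \cite[9.8]{KL} for the Hopf link colored $(i,2)$, the ratio $H(i,2)/\Delta_i$ is a sum of two of the color-$1$ ratios, $H(i,2)/\Delta_i=(H(i,1)/\Delta_i)^2-1$ via the recursion $e_1 e_1=e_2+1$ applied to the Hopf link evaluation, so $H(i,2)/\Delta_i=(-A^{2i+2}-A^{-2i-2})^2-1=A^{4i+4}+A^{-4i-4}+2-1=A^{4i+4}+A^{-4i-4}+1$. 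Summing this geometric-type series over $0\le i\le d-1$ at $A^2$ a primitive $(2d+1)$-st root of unity, the two exponential sums $\sum_i A^{4i+4}$ and $\sum_i A^{-4i-4}$ each vanish (they are sums over $d$ consecutive powers of a primitive $(2d+1)$-st root of $A^4$, which is still primitive since $\gcd(4,2d+1)=1$ — wait, $A^4$ has order $(2d+1)/\gcd(2,2d+1)=2d+1$, and a sum of $d$ out of $2d+1$ such powers is not automatically zero, so this needs care), leaving $\sum_{i=0}^{d-1}1=d$ minus a correction of $1$, giving $d-1$. The precise bookkeeping of which partial sum of roots of unity appears is where I expect to have to be careful.

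The main obstacle, then, is the evaluation of the two finite character sums $\sum_{i=0}^{d-1}A^{\pm(4i+4)}$ at $A=e^{\pm\pi i/(2d+1)}$: these are partial geometric sums, not full ones, so one cannot simply invoke $\sum_{\text{all roots}}=0$. I would handle this by relating the computation back to the already-proved Lemma~\ref{one}: since $H(i,2)/\Delta_i=(H(i,1)/\Delta_i)^2-1$, we get
$$\left<1\times1\times S^1(\text{colored }2)\right>=\sum_{i=0}^{d-1}\left(\frac{H(i,1)}{\Delta_i}\right)^2-\sum_{i=0}^{d-1}1,$$
and the second sum is $d$ while the first is $\sum_i(-A^{2i+2}-A^{-2i-2})^2$. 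Alternatively, and more cleanly, one recognizes $\sum_i(H(i,1)/\Delta_i)^2$ as the invariant of $S^1\times S^1\times S^1$ with an $S^1$ colored by $e_1 e_1=e_2+e_0$, i.e. it equals $\left<1\times1\times S^1(\text{colored }2)\right>+\left<\emptyset\right>$, so the relation $\left<1\times1\times S^1(\text{colored }2)\right>=\bigl(\left<1\times1\times S^1(\text{colored }2)\right>+d\bigr)-d$ is a tautology and instead I should compute $\sum_i(H(i,1)/\Delta_i)^2$ head-on. Expanding $(-A^{2i+2}-A^{-2i-2})^2=A^{4i+4}+2+A^{-4i-4}$ and summing, the constant term contributes $2d$ and the two root-of-unity sums $S_\pm:=\sum_{i=0}^{d-1}A^{\pm(4i+4)}$ satisfy $S_++S_-=\overline{S_+}+S_+\in\BR$; a short computation with the formula for a finite geometric series at a $(2d+1)$-st root of unity shows $S_++S_-=-1$ (this is the standard fact that for a primitive $(2d+1)$-st root of unity $\zeta$, $\sum_{i=0}^{d-1}(\zeta^{i}+\zeta^{-i})$-type sums over "half" the nonzero residues equal $-1$), giving $\sum_i(H(i,1)/\Delta_i)^2=2d-1$ and hence, subtracting $\left<\emptyset\right>=d$, the value $d-1$. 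Finally, with $\left<\emptyset\right>=d$ and $\left<1\times1\times S^1(\text{colored }2)\right>=d-1$ established, Lemma~\ref{dim} follows: if $R_1(A)\cdot d+R_2(A)\cdot(d-1)=0$ in $\BC^\Gamma$ for rational functions $R_1,R_2$, then evaluating at the infinitely many points $A=e^{\pm\pi i/(2d+1)}$ forces $R_1(A)d+R_2(A)(d-1)$ to vanish as a rational function in $A$ once we note $d$ is determined by $A$ (specifically $A^{2d+1}=\pm1$ determines $d$ on each $\Gamma$-sequence), and a Vandermonde-type argument on the distinct growth rates in $d$ forces $R_1=R_2=0$.
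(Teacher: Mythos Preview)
Your surgery computation is correct. The identity $H(i,2)/\Delta_i=(H(i,1)/\Delta_i)^2-1$ does follow from $e_1^2=e_2+e_0$ applied to a meridian, and the key step $S_++S_-=-1$ is valid because $A^4$ is a primitive $(2d+1)$-st root of unity and the two index sets $\{A^{4(i+1)}:0\le i\le d-1\}$ and $\{A^{-4(i+1)}:0\le i\le d-1\}$ are disjoint and together exhaust all $2d$ nontrivial $(2d+1)$-st roots of unity; hence their sum is $-1$, giving $\sum_i(H(i,1)/\Delta_i)^2=2d-1$ and thus $\langle 1\times1\times S^1(\text{colored }2)\rangle=d-1$. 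For the empty link you do not need any ``$\eta^2\sum\Delta_i^2=1$-type'' identity: the same general formula with $a=0$ gives $\sum_i H(i,0)/\Delta_i=\sum_i\Delta_i/\Delta_i=d$ directly.

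This is a different route from the paper's main proof, which is a one-line appeal to the TQFT trace axiom: writing $S^1\times S^1\times S^1=S^1\times T^2$, one has $\langle\emptyset\rangle=\trace(\Id)=\dim V_{2d+1}(T^2)=d$ and $\langle 1\times1\times S^1(\text{colored }2)\rangle=\dim V_{2d+1}(T^2,\text{pt colored }2)=d-1$, with the dimensions read off from \cite{BHMV2} or \cite{GM}. The paper does record your surgery approach as an alternative in a remark, evaluating the $a=2$ sum via the closed form $H(i,a)=(-1)^{a+i}[(a+1)(i+1)]$ rather than your recursion. The trace argument is shorter and conceptual but imports the module-dimension computation; your approach is more self-contained once the setup of Lemma~\ref{one} is in hand.

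One caution on your closing paragraph: the sentence ``$R_1(A)d+R_2(A)(d-1)$ vanishes as a rational function in $A$'' is not well-posed, since $d$ is not a rational function of $A$, and the ``Vandermonde-type argument on distinct growth rates'' is not yet a proof. The paper treats Lemma~\ref{dim} separately by an analytic-continuation argument comparing a hypothetical $F\in\BQ(A)$ with an explicit holomorphic function involving $\log$. That is outside Lemma~\ref{dim2}, but you should not claim it comes for free.
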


\begin{proof} By the trace property of TQFTs \cite[1.2]{BHMV2}, we have that $$\left<S^1 \times S^1 \times S^1,\emptyset\right> = 
\trace( Id: V(S^1 \times S^1)\rightarrow V(S^1 \times S^1))=\dim( V (S^1 \times S^1)).$$ 
Similarly: $$\left<1 \times 1 \times S^1(\text{colored $2$})\right>= \dim(V (S^1 \times S^1 \text{with a point colored $2$})).$$  By \cite[4.11]{BHMV2} (or more directly from \cite[p.97]{GM}), for $0 \le c \le d-1$, \newline $\dim(V (S^1 \times S^1 \text{with a point colored  $2c$}))= d-c.$  Thus, $$\dim( V (S^1 \times S^1)=d \quad \text{and} \quad \dim( V (S^1 \times S^1 \text{with a point colored $2$}))=d-1.$$  
\end{proof}
Above, we made use of the trace property of TQFTs that requires as a  hypothesis that the theory satisfy the tensor product axiom. For $V_{2d+1}$, this is true when the functor is restricted to the cobordism category that has as objects surfaces with the property that the sum of the colors of the colored points on each component of a surface  is even.  So we cannot calculate  $\left<1 \times 1 \times S^1 \ \text{(odd colored)} \right>$
 in the same way.  In fact, $\dim (V (S^1 \times S^1 \ \text{with a point colored $1$})=0$, whereas by Lemma \ref{one},  $\left<1 \times 1 \times S^1\right>=1$.

 Nevertheless, we can use trace property of TQFTs  to give a second proof of Lemma \ref{one} as follows:
 By \cite[Lemma 6.3 (iii)]{BHMV1}, one can recolor a link component of a   link $L$ colored one with the color $2d-2$ without changing $\left<L\right>_{2d+1}$. Thus \[\left<1 \times 1 \times S^1 \ \text{(colored 1)}\right>=\left<1 \times 1 \times S^1 \ \text{(colored $2d-2$)}\right> \]  As $\dim (V (S^1 \times S^1  \ \text{with a point colored 2d-2}))=1,$ by the trace formula,  \[\left<1 \times 1 \times S^1 \ \text{(colored $2d-2$)}\right> =1. \]
We remark that one can also give a second proof of  Lemma \ref{dim2} as follows:
By the same argument as given at the beginning of the proof of Lemma \ref{one}, one has that
$$\left<1 \times 1 \times S^1 (\text{(colored a)})\right>= \sum_{i=0} ^{d-1} \frac {H(i,a)}{\Delta_i}.$$ When $a=0$, the right hand side is clearly $d$. One can also show that when $a=2$ the right hand side  is $d-1$  using $H(i,a)=(-1)^{a+i}[(a+1)(i+1)]$ where $[n]= \frac {A^{2n}-A^{-2n}}{A^{2}-A^{-2}}$ ({\em e.g.} \cite[p. 350]{G2}).

\begin{proof}[Proof of Lemma \ref{dim}] 
We assume that there is a $F(A)\in Q(A)$, such that
 $$F( e^{\pm \pi i/(2d+1)}) =\left<1 \times 1 \times S^1(\text{(colored $2$)})\right>/\left<\emptyset\right>=\frac {d-1}{d}$$
for almost all $e^{\pm \pi i/(2d+1)}$'s, and seek a contradiction.
 Taking the limit as $d \to \infty$, we have that $F(1)=1$. We now exhibit an holomorphic function which agrees with $F(A)$ at an infinite set of points which contains a limit point. Let $\log$ be the inverse of $\exp$ defined on $U=\BC \setminus \{ z| \Re(z)\le 0, \Im(z) =0 \}$ taking values with imaginary part in 
 $(- \pi,\pi)$. Consider $f(z)= \frac{ \pi i-3 \log(z)}{ \pi i- \log(z)}$. One may check that $f(e^{\pi i/(2d+1)})=\frac {d-1}{d}$, $f(e^{-\pi i/(2d+1)})=\frac {d+2}{d+1}$
 and $f(1)=1.$ Thus the two holomorphic functions $F$ and $f$ on $U\setminus \{\text{the poles of F}\}$  agree on a set with a limit point, namely at $1$ and at almost all $e^{\pi i/(2d+1)}$. So $F$ and $f$ should agree on $U\setminus \{\text{the poles of F}\}$. But $F$ and $f$ 
 disagree on an infinite set of points of the form $e^{-\pi i/(2d+1)}$. This is the contradiction that we seek.
\end{proof}

Together with Harris, we  calculated the skein module of $\mathcal Q$, the quotient space $S^3$ modulo the  quaternion 8-group  \cite{GH}. The skein module of $\mathcal Q$ is strikingly similar to the skein module of the 3-torus.
Both \cite{GH,C} use a decomposition of $K(M)=\oplus_{x \in H_1(M,\BZ_2)} K(M,x)$ into submodules  
$K(M,x)$ which are defined similarly to $K(M)$ except one considers only links representing the 
$\BZ_2$-homology class  $x$. Then symmetries of $\mathcal Q$ and of the 3-torus show that, in each case, the skein submodules indexed by the non-trivial homology classes are all isomorphic. Different arguments are put forward, in each case, to show that a submodule indexed by a non-trivial $\BZ_2$-homology class has
dimension at most one. The proof that we give here that this dimension is one is similar to the proof used in \cite{GH} to prove this for $\mathcal Q$ . Likewise two different type proofs are used in \cite{GH} and \cite{C} to show that the submodules  indexed by the trivial class in each case have dimension at most two. The two proofs (here and in  \cite{GH}) that this dimension is two  are similar. We note that Harris \cite{H} has also calculated the skein module of $\mathcal Q$ over $\BZ[A,A^{-1}]$.


\begin{thebibliography}{BHMV2}


\bibitem[BHMV1] {BHMV1} {\sc C. Blanchet, N. Habegger, G. Masbaum,
P. Vogel.} {Three-manifold invariants derived from the Kauffman bracket.}
{\em Topology} { 31} (1992), 685-699.


\bibitem[BHMV2]{BHMV2} {\sc  C. ~Blanchet, N. ~Habegger, G. ~Masbaum,  P. ~Vogel.}
{Topological quantum field theories derived from the Kauffman bracket,}  {\em Topology}   {\bf 34} (1995), 883-927

\bibitem[C]{C} {\sc  A.~Carrega},  {9 generators of the skein space of the 3-torus,} (2016) arXiv:1603.09661

 \bibitem[G1]{G2} {\sc P. M. Gilmer.} Turaev-Viro Modules of Satellite Knots , Knots 96, ed S. Suzuki, World Scientific 1997, 337-363


\bibitem[G2]{G} {\sc P. M. Gilmer.} {Skein theory and Witten-Reshetikhin-Turaev Invariants of links in lens spaces,} { \em Commun. Math. Phys.,} 202 (1999), 411-419





\bibitem[GH]{GH} {\sc P. M. Gilmer, J. Harris.} {On the Kauffman bracket skein module of the quaternionic manifold,} { \em J. Knot Theory Ramifications,} 16, (2007) 103--125 

\bibitem[GM]{GM} {\sc P. M. Gilmer, G. Masbaum.} {Integral TQFT for a one-holed torus}, { \em Pacific Journal Of math},  252  (2011), 93--112

\bibitem[GZ]{GZ}   {\sc P. M. Gilmer, J. Zhong.} {On the Homflypt Skein Module of $S^1\times S^2$}, { \em Math. Zeit.,} 237 (2001), 769-814 

\bibitem[H]{H} {\sc J.M.Harris}, 
{ The Kauffman bracket skein module of surgery on a (2,2b) torus link.} 
{ \em Pacific J. Math.} 245 (2010),  119Ð140

 \bibitem[K]{K} {\sc L.H.Kauffman},{ State models and the Jones polynomial.} {\em Topology} 26 (1987), no. 3, 395--407.
  
 \bibitem[KL]{KL} { \sc L.H. Kauffman,S.L.Lins}, {\em Temperley-Lieb Recoupling Theory and Invariants of 3-Manifolds},
Annals of mathematics studies,{134},
Princeton University Press, 1994.


\bibitem[L]{Li93} { \sc W.B.R. Lickorish}, {The skein method for 3-manifold invariants}, { \em J. Knot Theory Ramifications,} {2}, 1993, 171-194.




\bibitem[P]{P}  {\sc J. H, Przytycki}, {Skein modules of 3-manifolds}. {\em Bull. Ac. Pol.:
Math.,} { 39}(1-2), 1991, 91-100.






\bibitem[T]{T1} {\sc V. G. Turaev}, {The Conway and Kauffman modules of a solid Torus}, (translation) { \em J. Soviet.  Math.} 51 (1) (1990), 2799--2805.



\end{thebibliography}
\end{document}